\newtheorem{thm}{Theorem}[section]
\newtheorem{lem}[thm]{Lemma}
\theoremstyle{definition}
\newtheorem*{defin}{Definition}
\DeclareMathOperator{\ric}{Ric}
\DeclareMathOperator{\spin}{Spin}
\DeclareMathOperator{\Cl}{Cl}
\def\rr{\mathbb{R}}
\def\too{\longrightarrow}
\DeclareMathOperator*{\ave}{ave}
\begin{document}

\title{On the near-equality case of the Positive Mass Theorem}
\author{
Dan A. Lee\\ Duke University\\ dalee@math.duke.edu 
}
\date{\today}
\maketitle

\begin{abstract}
The Positive Mass Conjecture states that any complete asymptotically flat manifold of nonnnegative scalar curvature has nonnegative mass. Moreover, the equality case of the Positive Mass Conjecture states that in the above situation, if the mass is zero, then the Riemannian manifold must be Euclidean space.  The Positive Mass Conjecture was proved by R. Schoen and S.-T. Yau for all manifolds of dimension less than $8$ \cite{schyau}, and it was proved by E. Witten for all spin manifolds \cite{witten}.  In this paper, we consider complete asymptotically flat manifolds of nonnegative scalar curvature that are also harmonically flat in an end.  We show that, whenever the Positive Mass Theorem holds, any appropriately normalized sequence of such manifolds whose masses converge to zero must have metrics that are uniformly converging to Euclidean metrics outside a compact region.  This result is an ingredient in a forthcoming proof, co-authored with H.\ Bray, of the Riemannian Penrose inequality in dimensions less than $8$ \cite{braylee}.
\end{abstract}

\section{Introduction}

The (Riemannian) Positive Mass Conjecture was proved by R. Schoen and S.-T. Yau for all manifolds of dimension less than $8$ using minimal hypersurfaces \cite{schyau} (see also \cite[Section 4]{montecatini}), and soon later it was proved by E. Witten for all spin manifolds using a Bochner-type argument \cite{witten} (see also \cite[Section 6]{bartnik}).  While the conjecture was originally motivated by general relativity, it has since proven to be fundamental for understanding scalar curvature.  Before describing our result, we first review the relevant background material.

\begin{defin}
Let $n\geq 3$.  A Riemannian manifold $(M^n,g)$ is said to be \emph{asymptotically flat}\footnote{Note that there are various inequivalent definitions of asymptotic flatness in the literature, but they are all similar in spirit.  This one is taken from \cite[Section 4]{montecatini}.} if there is a compact set $K\subset M$ such that $M\smallsetminus K$ is a disjoint union of \emph{ends}, $E_k$, such that each end is diffeomorphic to $\rr^n\smallsetminus B_1(0)$, and in each of these coordinate charts, the metric $g_{ij}$ satisfies
\begin{eqnarray*}
g_{ij}&=&\delta_{ij}+O(|x|^{-p})\\
g_{ij,k}&=&O(|x|^{-p-1})\\
g_{ij,kl}&=&O(|x|^{-p-2})\\
R_g&=&O(|x|^{-q})
\end{eqnarray*}
for some $p>(n-2)/2$ and some $q>n$, where the commas denote partial derivatives in the coordinate chart, and $R_g$ is the scalar curvature of $g$.

In this case, in each end $E_k$, the limit
$$m(E_k,g)={1\over 2(n-1)\omega_{n-1}}\lim_{\rho\to\infty}\int_{S_\rho} (g_{ij,i}-g_{ii,j})\nu_j d\mu$$
exists (see, e.g.\ \cite[Section 4]{montecatini}), where  $\omega_{n-1}$ is the area of the standard unit $(n-1)$-sphere, $S_{\rho}$ is the coordinate sphere in $E_k$ of radius $\rho$, $\nu$ is its outward unit normal, and $d\mu$ is the Euclidean area element on $S_{\rho}$.  We call the quantity $m(E_k,g)$, first considered by Arnowitt, Deser, and Misner (see, e.g.\ \cite{ADM}), the \emph{ADM mass} of the end $(E_k,g)$, or when the context is clear, we simply call it the \emph{mass}, $m(g)$.  (Under an additional assumption on the Ricci curvature, R.\ Bartnik showed that the ADM mass is a Riemannian invariant, independent of choice of asymptotically flat coordinates \cite{bartnik}.)

\end{defin}

\begin{defin}
If $M^n$ is a smooth manifold, we say that the \emph{Non-strict Positive Mass Theorem holds on $M$} if the following statement holds:  Given any complete asymptotically flat metric on $M$ with nonnegative scalar curvature, the mass of each end is nonnegative.\footnote{If $M$ has the wrong topology for supporting such metrics, then we may consider the statement to be vacuously true.}
\end{defin}
The Positive Mass Conjecture is the conjecture that the Non-strict Positive Mass Theorem holds on every smooth manifold.  We added the word ``non-strict'' because we have not yet described the equality case.  When $n=3$, this conjecture is natural in the context of general relativity; it simply says that if you have a time-symmetric slice of spacetime and the mass density is nonnegative, then the total mass must be nonnegative.  As mentioned above, this conjecture has been proven in many cases.
\begin{thm}[Non-strict Positive Mass Theorem]
If $M^n$ is spin or $n<8$, then the Non-strict Positive Mass Theorem holds on $M$.
\end{thm}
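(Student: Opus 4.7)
The plan is to handle the two hypotheses separately, since they correspond to the two fundamentally different proofs in the literature. For $n<8$, I would follow the minimal hypersurface descent of Schoen--Yau, proceeding by induction on dimension starting from $n=3$. For the spin case in arbitrary dimension, I would follow Witten's Dirac operator argument.

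For Schoen--Yau, I would argue by contradiction: suppose $(M^n,g)$ is asymptotically flat with $R_g\geq 0$ but with some end whose ADM mass is strictly negative. The strategy is to construct a complete, properly embedded, stable minimal hypersurface $\Sigma^{n-1}\subset M$ that is asymptotic at infinity to a coordinate hyperplane in the bad end; existence follows from solving Plateau-type problems in large coordinate balls and taking limits, with the negative mass providing the barrier that keeps the minimizing sequence from escaping to infinity. One then shows, via the Gauss equation combined with the stability inequality and a conformal change, that $\Sigma$ carries a complete asymptotically flat metric of nonnegative scalar curvature whose mass is again nonpositive and strictly less than that of the ambient end. Iterating drops the dimension until one reaches $n=2$, where Gauss--Bonnet contradicts the existence of such a metric. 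The restriction $n<8$ enters because codimension-one area minimizers are smooth only through dimension seven, beyond which one has to grapple with a singular set whose geometry is not understood well enough to carry out the descent.

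For the spin case, I would follow Witten. The plan is to solve the Dirac equation $D\psi=0$ on $M$ with $\psi$ asymptotic to a fixed parallel spinor $\psi_0$ in the given end; existence and appropriate decay follow from a weighted Sobolev Fredholm theory tailored to asymptotically flat manifolds. The Lichnerowicz--Weitzenb\"ock identity $D^2=\nabla^*\nabla+\tfrac{1}{4}R_g$, integrated over a large coordinate ball $B_\rho$ after integration by parts, yields
$$\int_{B_\rho}\left(|\nabla\psi|^2+\tfrac{1}{4}R_g|\psi|^2\right)\dd V \;=\; \oint_{S_\rho}\langle\psi,(\nabla_\nu+\nu\cdot D)\psi\rangle\,d\mu.$$
A careful expansion of the right-hand side in the asymptotically flat coordinates shows that it converges as $\rho\to\infty$ to a positive multiple of $m(g)|\psi_0|^2$. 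Since $R_g\geq 0$ and the left-hand side is nonnegative in the limit, one concludes $m(g)\geq 0$.

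The main obstacles in each approach are analytic rather than geometric. For Schoen--Yau, the delicate step is constructing the asymptotic-hyperplane minimizer and controlling the asymptotics of the induced metric on $\Sigma$ precisely enough to run the induction, so that the conformal change produces a genuinely asymptotically flat metric with strictly smaller mass. For Witten, the delicate step is solving the Dirac equation in the correct weighted function space and then justifying the precise form of the boundary integral at infinity, including the identification of its limit with a positive multiple of the ADM mass. Since both arguments are by now standard and appear in the references already cited in the excerpt, I would ultimately defer to \cite{schyau} and \cite{montecatini} for the minimal hypersurface approach and to \cite{witten} and \cite{bartnik} for the spinor approach.
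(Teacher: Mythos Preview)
Your proposal is correct and aligns with the paper: the paper does not prove this theorem at all but simply records it as a known result, citing \cite{schyau} and \cite{montecatini} for the minimal hypersurface case $n<8$ and \cite{witten} and \cite{bartnik} for the spin case. Your sketch of both arguments is accurate and ends by deferring to exactly the same references, so there is nothing to compare.
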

\begin{thm}[Equality case of Positive Mass Theorem]
Let $M^n$ be any smooth manifold on which the Non-strict Positive Mass Theorem holds.  If $(M,g)$ is a complete asymptotically flat manifold with nonnegative scalar curvature, and the mass of one of its ends is zero, then $(M,g)$ is isometric to Euclidean space.
\end{thm}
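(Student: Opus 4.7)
The plan is the classical Schoen--Yau conformal deformation argument, which uses the Non-strict Positive Mass Theorem as an obstruction to the existence of scalar-flat asymptotically flat metrics on $M$ with strictly negative mass. The proof proceeds in three stages: (i) $R_g \equiv 0$, (ii) $\ric_g \equiv 0$, and (iii) $(M,g)$ is isometric to Euclidean space.

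\textbf{Stage 1.} Suppose $R_g \not\equiv 0$. Let $c_n = 4(n-1)/(n-2)$. Standard analysis on asymptotically flat manifolds (weighted Sobolev theory) yields a positive solution $u$ to $-c_n\Delta_g u + R_g u = 0$ with $u \to 1$ on each end. The maximum principle together with $R_g \not\equiv 0$ forces $u < 1$ somewhere, and expanding $u = 1 + A|x|^{2-n} + o(|x|^{2-n})$ on the distinguished end and integrating the equation over an exhaustion gives
\[
A\,(2-n)\,\omega_{n-1} = c_n^{-1}\int_M R_g\, u\, dV_g > 0,
\]
so $A<0$. The conformal metric $\tilde g = u^{4/(n-2)}g$ is scalar-flat, and a direct computation of the change of mass under conformal scaling gives $m(\tilde g) = m(g) + 2A < 0$, contradicting the Non-strict Positive Mass Theorem on $M$.

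\textbf{Stage 2.} Now $R_g = 0$. For an arbitrary compactly supported symmetric $(0,2)$-tensor $h$ on $M$, set $g_t = g + th$. For small $|t|$, the implicit function theorem in weighted Sobolev spaces produces a solution $u_t$ to $-c_n\Delta_{g_t}u_t + R_{g_t}u_t = 0$ with $u_t \to 1$ and $u_0 \equiv 1$; write its asymptotic leading coefficient as $A(t)$. Since $h$ is compactly supported, $m(g_t) = 0$, so $m(u_t^{4/(n-2)}g_t) = 2A(t)$. The Non-strict Positive Mass Theorem then forces $A(t) \geq 0 = A(0)$ for all small $t$, hence $A'(0) = 0$. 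Differentiating the conformal equation at $t=0$ and using $R_g = 0$, the variation $u_0' = \partial_t u_t|_{t=0}$ satisfies $c_n\Delta_g u_0' = R_g'(h)$ with
\[
R_g'(h) = -\Delta_g(\operatorname{tr}_g h) + \operatorname{div}_g\operatorname{div}_g h - \langle h, \ric_g\rangle.
\]
Reading off the asymptotic coefficient of $u_0'$ and observing that the first two terms of $R_g'(h)$ integrate to zero for compactly supported $h$ identifies $A'(0)$ as a positive multiple of $\int_M \langle h, \ric_g\rangle\,dV_g$. Thus $\int_M \langle h, \ric_g\rangle\,dV_g = 0$ for every admissible $h$, and plugging in $h = \varphi\,\ric_g$ for nonnegative smooth cutoffs $\varphi$ forces $\ric_g \equiv 0$.

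\textbf{Stage 3.} Once $(M,g)$ is Ricci-flat, it is real-analytic in harmonic coordinates (DeTurck--Kazdan). In harmonic coordinates at infinity, $\ric(g) = 0$ is a quasilinear elliptic system for $g$; one inductively derives an asymptotic expansion of $g - \delta$ term by term, in which the vanishing of the mass forces the leading coefficient (at order $|x|^{2-n}$) to vanish and matching of higher-order Einstein obstructions forces all subsequent coefficients to vanish as well. Consequently $g - \delta$ vanishes to infinite order on the end; analyticity then gives $g \equiv \delta$ on the end and (by analytic continuation of the Riemann tensor in harmonic coordinates, using connectedness of $M$) implies that $(M,g)$ is flat, hence isometric to $\mathbb{R}^n$. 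The heart of the proof is Stage 2, where one must justify solvability of $u_t$ in the correct weighted Sobolev setting, track the smooth dependence $t \mapsto A(t)$, and keep the signs in the relation between the linear change in mass and $\int \langle h,\ric_g\rangle\, dV_g$ under firm control; Stage 3 is also nontrivial in dimensions $n \geq 4$, since Ricci-flatness does not imply flatness and the rigidity rests on the combined use of analyticity, asymptotic flatness, and $m(g) = 0$.
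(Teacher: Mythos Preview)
The paper does not give its own proof of this statement; it simply records that the equality case follows from the non-strict inequality via the Schoen--Yau variational argument and then develops a quantitative refinement of that argument as its Main Theorem. Your Stages~1 and~2 are precisely that argument and agree with the paper's approach: the paper's proof of the Main Theorem in fact takes the specific variation $g_s=g+s\varphi\,\ric(g)$ you invoke at the end of Stage~2 and identifies $\dot m(0)$ with a nonnegative multiple of $\int\varphi\,|\ric_g|^2$.

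Stage~3, however, contains a genuine gap. Your assertion that, in harmonic gauge, the Ricci-flat equation together with $m(g)=0$ forces \emph{every} term in the asymptotic expansion of $g-\delta$ to vanish is a purely local claim about Ricci-flat metrics on an exterior region $\rr^n\smallsetminus B_R$, and it is false. The double cover of the end of the Eguchi--Hanson metric furnishes a Ricci-flat metric on $\rr^4\smallsetminus B_R$ whose $|x|^{2-n}$ coefficient vanishes but which has a nonzero term at order $|x|^{-4}$. Your inductive scheme breaks because at each decay level the linearized Ricci-flat equation in harmonic gauge still admits nontrivial decaying harmonic solutions (higher spherical-harmonic modes), and these are free data, not forced to zero by lower-order matching. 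The standard completion of Stage~3 is global rather than asymptotic: once $\ric_g\equiv 0$, asymptotic flatness gives Euclidean volume growth, and the rigidity in Bishop--Gromov volume comparison (together with Cheeger--Gromoll splitting to exclude multiple ends) yields $(M,g)\cong(\rr^n,\delta)$. This is the route taken in \cite{montecatini} and \cite{bartnik}.
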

The reason we have described these theorems in this odd way is to emphasize that the equality case of the Positive Mass Theorem on $M$ always follows from the Non-strict Positive Mass Theorem on $M$.  The proof of this fact is due to Schoen and Yau \cite[Section 3]{schyau} and uses a variational argument.  The present work is a generalization of that argument.  In the spin case, one can also prove the equality case directly using Witten's spinor argument \cite{witten}.

A simple-minded view of the case of small mass is that if Euclidean space 
is the \emph{only} asymptotically flat manifold with nonnegative scalar 
curvature and \emph{zero} mass, then it stands to reason that any 
asymptotically flat manifold with nonnegative scalar curvature and 
\emph{small} mass should be \emph{close} to Euclidean space in some sense.  
Our 
goal is to see how far we can push this simple-minded idea.  One obvious 
problem here is that mass is not scale-invariant, and consequently, the 
hypothesis of small mass is not saying much.  Therefore, in addition to 
the hypothesis of small mass, we need to introduce another dimensional 
quantity in our hypotheses.  One way to do this is to assume that the 
metric is harmonically flat in an end (i.e.\ conformally flat in an end, 
with a harmonic conformal factor).  We now state the main result of this 
paper.

\begin{thm}[Main Theorem] Let $M^n$ be any smooth manifold on which the 
Non-Strict Positive Mass Theorem holds.  Let $\rr^n\smallsetminus 
B_{R}(0)$ be a coordinate chart for one of the ends of~$M$.  Let $g$ be a 
complete asymptotically flat metric of nonnegative scalar curvature on 
$M$, and suppose that $$g_{ij}(x)=U(x)^{4\over n-2}\delta_{ij}$$ in 
$\rr^n\smallsetminus B_{R}(0)$, where $U$ is a positive (Euclidean) 
harmonic function on $\rr^n\smallsetminus B_{R}(0)$ with 
$\lim_{x\to\infty} U(x)=1$.

Then for any $\epsilon>0$, there exists $\delta>0$ such that if the mass of $(M,g)$ in this end is less then $\delta R^{n-2}$, then 
$$\sup_{|x|>aR}|U(x)-1|<\epsilon,$$ 
where $a$ is a universal constant depending only on $n$.  The constant $\delta$ depends only on $\epsilon$ and $n$.  In particular, it does not depend on the topology of $M$.
\end{thm}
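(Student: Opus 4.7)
The plan is to argue by contradiction using compactness, so that a hypothetical sequence of counterexamples produces in the limit a complete asymptotically flat manifold of nonnegative scalar curvature with zero mass that is not Euclidean, contradicting the equality case of the Positive Mass Theorem. Suppose the conclusion fails for some fixed $\epsilon>0$: one has a sequence $(M_i,g_i)$ satisfying the hypotheses on coordinate charts $\mathbb{R}^n\setminus B_{R_i}(0)$ with conformal factors $U_i$ and masses $m_i$, and points $x_i$ with $|x_i|>aR_i$ at which $|U_i(x_i)-1|\geq \epsilon$, but with $m_i/R_i^{n-2}\to 0$. Using the scaling law $m(\lambda^{2}g)=\lambda^{n-2}m(g)$ together with the scale-invariance of the harmonic-flatness condition, I rescale so that $R_i=1$; then $m_i\to 0$ and $|U_i(x_i)-1|\geq\epsilon$ for some $|x_i|>a$.

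Next I reformulate the end via the Kelvin transform. The function $v_i:=U_i-1$ is harmonic on $\{|x|>1\}$ and tends to $0$ at infinity, so $\tilde v_i(y):=|y|^{2-n}v_i(y/|y|^2)$ extends to a harmonic function on the closed unit ball whose value $\tilde v_i(0)$ is a fixed constant multiple of $m_i$. Thus $\tilde v_i(0)\to 0$ while $\sup_{|y|<1/a}|\tilde v_i|\geq c\epsilon$ for a constant $c=c(n,a)$. The key analytic step is to establish a uniform $L^\infty$ bound for $U_i$ on $\{|x|\geq 1\}$. This is the only step where the global hypotheses on $(M_i,g_i)$---completeness, nonnegative scalar curvature, and the availability of the Non-strict Positive Mass Theorem on $M_i$---must enter, since on the end alone the boundary values of $U_i$ on $\partial B_1$ are not controlled by the harmonic structure. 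I would argue via a Schoen--Yau-style conformal deformation: if $U_i|_{\partial B_1}$ were too large, then solving the appropriate boundary-value problem for the conformal Laplacian $L_{g_i}$ on $M_i$ would yield a positive solution $\phi_i\to 1$ at infinity whose leading coefficient forces the scalar-flat conformal modification $\phi_i^{4/(n-2)}g_i$ to have strictly negative ADM mass, contradicting the Non-strict Positive Mass Theorem on $M_i$.

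Given the uniform bound, elliptic theory yields a subsequence with $\tilde v_i\to\tilde v_\infty$ in $C^\infty_{\mathrm{loc}}(B_1)$, where $\tilde v_\infty$ is harmonic, $\tilde v_\infty(0)=0$, and $\sup_{|y|<1/a}|\tilde v_\infty|\geq c\epsilon$. Inverting the Kelvin transform produces a positive harmonic $U_\infty\to 1$ on $\{|x|>1\}$ with vanishing leading coefficient (zero ``end mass'') and $U_\infty\not\equiv 1$. To close the argument, I glue the scalar-flat end $(\mathbb{R}^n\setminus B_1, U_\infty^{4/(n-2)}\delta)$ to an arbitrary smooth positive extension on $\mathbb{R}^n$ to obtain $(M_\infty,g_\infty)$, and then conformally change by the positive solution $\psi_\infty$ of $L_{g_\infty}\psi_\infty=0$ with $\psi_\infty\to 1$ at infinity to produce a complete scalar-flat asymptotically flat $(M_\infty,\psi_\infty^{4/(n-2)}g_\infty)$, whose mass is the sum of the zero end contribution and a nonpositive correction. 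Since the Non-strict Positive Mass Theorem holds on $\mathbb{R}^n$ (it is spin), the mass vanishes; the equality case then forces $(M_\infty,\psi_\infty^{4/(n-2)}g_\infty)$ to be Euclidean, but its end is conformally flat with conformal factor $\psi_\infty U_\infty$, which is a positive Euclidean harmonic function not identically constant, the desired contradiction. The principal obstacle is the uniform bound of the second step: converting the smallness of the ADM mass, via the global Positive Mass Theorem, into quantitative control on $U_i$ near $\partial B_1$ independently of the topology of $M_i$.
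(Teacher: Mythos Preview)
Your compactness/contradiction approach has a genuine gap in the closing step. Once you extract the limit $U_\infty$ --- a positive harmonic function on an exterior domain with $U_\infty\to 1$ at infinity, vanishing $|x|^{2-n}$ coefficient, but $U_\infty\not\equiv 1$ --- this is \emph{not} by itself a contradiction: such functions exist, for instance the dipole $U_\infty(x)=1+x_1|x|^{-n}$. Your attempt to manufacture a contradiction with the equality case then collapses. After extending $U_\infty$ arbitrarily over $B_1$ and solving $L_{g_\infty}\psi_\infty=0$ with $\psi_\infty\to 1$, the resulting scalar-flat metric is $(\psi_\infty U_\infty)^{4/(n-2)}\delta$; but $\psi_\infty U_\infty$ is then a positive Euclidean-harmonic function on all of $\rr^n$ tending to $1$ at infinity, hence identically $1$ by Liouville. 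So the metric \emph{is} Euclidean, the equality case holds trivially, and no contradiction arises. Relatedly, your ``nonpositive correction'' to the mass is unjustified, since the extended $g_\infty$ has no sign on its scalar curvature inside $B_1$ --- and indeed the correction is precisely what returns the mass to zero. (Even the existence of a positive $\psi_\infty$ is not guaranteed for an arbitrary extension.)

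The paper does not pass to a limit. It uses a quantitative variational argument of Schoen--Yau type: deform $g$ to $g_s=g+s\varphi\,\ric(g)$ for a cutoff $\varphi$ supported in the harmonically flat end, conformally restore scalar-flatness, and study $m(s)$. One computes that $\dot m(0)$ dominates $\int \varphi\,|\ric|_g^2$, which in turn controls $\int|\nabla U|^4$ and hence $\sup|U-1|$ up to a spherical-average term of size $|m(0)|$. Uniform control of $|\ddot m(s)|$ over all admissible $(M,g)$ --- obtained from weighted elliptic estimates for the conformal Laplacian, using only Harnack-type bounds on $U$ in the end --- then lets one invoke the Non-strict Positive Mass Theorem at a single nearby parameter $s<0$ to force $\dot m(0)$ small whenever $m(0)$ is small. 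Incidentally, the uniform $L^\infty$ bound on $U$ that you flag as the principal obstacle follows directly from the Harnack inequality together with the mean-value identity $\ave_{S_r}U=1+\tfrac{m}{2}r^{2-n}$; it needs no global input from the Positive Mass Theorem.
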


The role of mass is sometimes easier to understand in a harmonically flat end.  Taking $U(x)$ to be as in the theorem above and expanding it in spherical harmonics, we see that
\begin{equation}\label{expansion}
U(x)=1+{m(g)\over 2}|x|^{2-n} + O(|x|^{1-n}).
\end{equation}
The hypothesis of a harmonically flat end may seem restrictive, but it is 
sometimes a reasonable one for geometric applications.  See \cite[Section 
4]{montecatini}, for example.
More importantly, the original motivation for this work was for use in the proof of the Riemannian Penrose inequality in dimensions less than $8$, and for this purpose, the above theorem suffices.  See \cite{braylee}.  Still, it would be interesting to understand how to relax the hypotheses.

While the theorem was originally contemplated with the idea that $R$ would be fixed while the mass approaches zero, the scale invariance of the theorem also gives us a result when the mass is fixed and $R$ approaches infinity.  If we also fix $g$, we recover the uninteresting fact that $g$ becomes flatter as $x\to\infty$.  However, by considering a family of metrics with uniformly bounded mass, the theorem tells us that these metrics become flatter as $x\to\infty$ \emph{uniformly in $g$}.

Also, we note that if $M$ is spin, then our main result follows from Witten's spinor proof of the Positive Mass Theorem.  This argument is given in \cite[Section 12]{bray}, but we include it here in Appendix A for the sake of completeness.  Another related result is the work of H.\ Bray and F.\ Finster \cite{brayfinster}, which also assumes that $M$ is spin.

The author thanks H.\ Bray for several helpful conversations, and also
 R.\ Schoen for 
suggesting how the variational argument used by Schoen and Yau for the 
equality case of the Positive Mass Theorem could be adapted for this work.

\section{Proof of Main Theorem}

Because of the scale invariance of the theorem, we may assume, without loss of generality, that $R=1$ for the remainder of this paper.

We will first prove the theorem under the assumption that $g$ is scalar-flat everywhere.  We will prove the theorem using a variational argument.  For now, let $a$ be some fixed constant larger than $3$.  Later, we will see how large $a$ needs to be in order for the theorem to hold.  Let $\varphi$ be a smooth cutoff function such that 
$$\varphi(x)=\left\{
\begin{array}{lll}
0&\text{ for }&|x|\leq a/3\\
1&\text{ for }&a/2\leq |x|\leq 3a\\
0&\text{ for }&|x|\geq 4a.\\
\end{array}
\right.$$
For a real parameter $s$, define $g_s=g+s\varphi\ric(g)$.  For small $|s|$, we can find a conformal factor $u_s$ such that $\lim_{x\to\infty}u_s(x)=1$ and $u_s^{4\over n-2}g_s$ is scalar-flat everywhere. Define \hbox{$m(s)=m\left(u_s^{4\over n-2}g_s\right)$}.  Note that this construction explicitly depends on the constant $a$.  Recall that scalar-flatness of $u_s^{4\over n-2}g_s$ means that $u_s$ solves the conformal Laplace's equation:
$$\Delta_{s}u_s-{n-2\over 4(n-1)}R_{s} u_s=0,$$
where $\Delta_s$ and $R_s$ denote the Laplacian and scalar curvature with respect to $g_s$.  Throughout this paper, $s$ subscripts will follow this convention, except for the subscript in $u_s$.  In particular, a $0$-subscript refers to the metric $g=g_0$.  The absence of a subscript means that the Euclidean metric is being used.  We will use the ``dot'' notation for derivatives with respect to the variable $s$, and we also adopt the shorthand notation $B_{\rho}=B_{\rho}(0)$ and $S_{\rho}=S_{\rho}(0)$.

\begin{lem}
Let $(M,g)$ be a scalar-flat, complete asymptotically flat manifold, and suppose that in one of the ends, $\rr^n\smallsetminus B_{1}$, we have
$$g_{ij}(x)=U(x)^{4\over n-2}\delta_{ij}$$
 where $U$ is a positive harmonic function on $\rr^n\smallsetminus B_{1}$ 
with $\lim_{x\to\infty} U(x)=1$. Given any $a>3$, and then defining $m(s)$ 
as in the paragraph above, we have $$\sup_{|x|>a}|U(x)-1|\leq 
C\left[(a^{4-n}\dot{m}(0))^{1\over4}+a^{2-n}|m(0)|\right],$$ for some 
constant $C$ depending only on $n$. \end{lem}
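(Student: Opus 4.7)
The plan is to derive an explicit variational formula for $\dot m(0)$ in terms of the Ricci curvature of $g$, use it to bound $\ric(g)$ on the annulus where $\varphi \equiv 1$, and finally convert this curvature bound into a sup-norm bound on $U-1$ via elliptic estimates for the harmonic function $U$.

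For the variational formula, observe that $h := \varphi\,\ric(g)$ has compact support, so the ADM mass of $g_s$ is constant in $s$ and equal to $m(0)$. The conformal change formula therefore gives $m(s) = m(0) + 2\lim_{|x|\to\infty}|x|^{n-2}(u_s(x) - 1)$. Linearizing the conformal Laplace equation at $s = 0$ (where $u_0 \equiv 1$ and $R_0 \equiv 0$) produces $\Delta_g\dot u_0 = \tfrac{n-2}{4(n-1)}\dot R_0$ with $\dot u_0 \to 0$ at infinity. Integrating $\Delta_g\dot u_0$ over a large ball and taking limits (a Green's identity on the asymptotically flat end) yields $\lim_{|x|\to\infty}|x|^{n-2}\dot u_0(x) = -\tfrac{1}{4(n-1)\omega_{n-1}}\int_M \dot R_0\,\dd V_g$. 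Substituting the standard linearization $\dot R_0 = -\Delta_g(\mathrm{tr}_g h) + \mathrm{div}_g\mathrm{div}_g h - \langle h, \ric(g)\rangle_g$ and discarding the first two terms by compact support gives
\[
\dot m(0) = \frac{1}{2(n-1)\omega_{n-1}}\int_M \varphi\,|\ric(g)|_g^2\,\dd V_g,
\]
so in particular $\int_A |\ric(g)|_g^2\,\dd V_g \leq C_n \dot m(0)$ on the annulus $A = \{a/2 \leq |x| \leq 3a\}$ where $\varphi \equiv 1$.

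To translate this into information about $U$, split $U = 1 + \tfrac{m(0)}{2}|x|^{2-n} + W$, where $W$ is harmonic on $\rr^n \smallsetminus B_1$ with vanishing spherical mean and $W \to 0$ at infinity; the middle term contributes at most $\tfrac{|m(0)|}{2}a^{2-n}$ to the sup of $|U-1|$ on $\{|x|>a\}$, accounting for the second piece of the claimed bound. The maximum principle gives $\sup_{|x|>a}|W| = \sup_{|x|=a}|W|$, so the remaining task is to bound $W$ on a suitable sub-annulus of $A$. For the conformally flat metric $g = U^{4/(n-2)}\delta$, a direct computation using $\Delta U = 0$ yields $|\ric(g)|_g^2\,\dd V_g = U^2|\ric(g)|^2\,\dd x$ and, after completing the square,
\[
|\ric(g)|^2 = \frac{4}{U^2}\Bigl|\nabla^2 U - \tfrac{n}{(n-2)U}\,\nabla U\otimes\nabla U\Bigr|^2 - \frac{4n}{(n-2)^2 U^4}|\nabla U|^4.
\]
Harnack's inequality keeps $U$ uniformly bounded above and below on $A$, since $U$ is positive, harmonic, and tends to $1$ at infinity.

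The main obstacle is the quartic gradient term in this identity, which prevents a clean pointwise bound of $|\nabla^2 U|$ by $|\ric(g)|$ and which cannot be absorbed by any pointwise AM-GM argument. I would close the argument by a bootstrap: combine the Ricci estimate with the identity and control $\int_A |\nabla U|^4$ using $\|\nabla U\|_{L^\infty}^2 \int_A|\nabla U|^2$, and then apply interior regularity for the harmonic function $U$ (which bounds $\sup_A|\nabla U|$ by $Ca^{-1}\sup_{A'}|U-1|$ on a slightly larger annulus $A'$) to turn the quartic gradient contribution into a quartic multiple of $N := \sup_{|x|>a}|U - 1|$. Combining this with the Poincar\'e and interior $L^\infty$--$L^2$ estimates for the harmonic function $W$ yields an algebraic inequality of the form
\[
N^4 \lesssim a^{4-n}\dot m(0) + (a^{2-n}|m(0)|)^4,
\]
from which taking fourth roots gives the stated exponent $\tfrac{1}{4}$. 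The $\tfrac{1}{4}$ power is precisely the manifestation of the quartic nonlinearity in $|\ric(g)|^2$ for conformally flat metrics.
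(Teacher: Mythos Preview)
Your computation of the variational formula $\dot m(0)=\frac{1}{2(n-1)\omega_{n-1}}\int_M\varphi\,|\ric(g)|_g^2\,dV_g$ is correct and matches the paper, as does the endgame: split off the radial part $\tfrac{m(0)}{2}|x|^{2-n}$ (equivalently, use the spherical-average identity) and invoke the maximum principle to reduce to an estimate on a single annulus.

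The divergence from the paper is precisely at the step you call ``the main obstacle.''  The paper does \emph{not} run a bootstrap; it asserts directly that
\[
\int_{B_{3a}\smallsetminus B_{a/2}}|\nabla U|^4\,dx\ \le\ C\,\dot m(0),
\]
and then feeds this into the harmonic-function oscillation estimate $\sup_{a<|x|<2a}|U-\mathrm{ave}\,U|\le C\bigl(a^{4-n}\int|\nabla U|^4\bigr)^{1/4}$.  The sign difficulty you identified is only apparent.  Using $\Delta U=0$ one has $\mathrm{div}(\nabla U/U)=-|\nabla U|^2/U^2$, hence
\[
\frac{U_{ij}U_iU_j}{U}\;=\;\tfrac12\,\mathrm{div}\!\Bigl(\frac{|\nabla U|^2\,\nabla U}{U}\Bigr)+\frac{|\nabla U|^4}{2U^2},
\]
and substituting this into your expansion of $|\ric|_\delta^2$ yields the \emph{pointwise} identity
\[
\tfrac{U^2}{4}\,|\ric|_\delta^2\;=\;|\nabla^2U|^2+\frac{n}{(n-2)^2}\,\frac{|\nabla U|^4}{U^2}\;-\;\frac{n}{n-2}\,\mathrm{div}\!\Bigl(\frac{|\nabla U|^2\,\nabla U}{U}\Bigr).
\]
Both non-divergence terms on the right are now nonnegative, so integrating over the region where $\varphi\equiv1$ (against a cutoff to handle the divergence) gives the $\int|\nabla U|^4$ bound directly.

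Your proposed bootstrap, by contrast, does not close as written.  Bounding $\int_A|\nabla U|^4\le Ca^{n-4}N^4$ via the interior gradient estimate is fine, but to feed this back you would need an inequality of the shape $N^2\lesssim a^{4-n}\!\int|A|^2$ (or $N^2\lesssim a^{4-n}\!\int|\nabla^2U|^2$), and ``Poincar\'e and interior $L^\infty$--$L^2$ estimates for $W$'' does not supply one: there is no Poincar\'e inequality controlling the oscillation of a harmonic function on an annulus purely by the $L^2$ norm of its Hessian.  Even granting such a step, you would land on $N^2\lesssim a^{4-n}\dot m(0)+CN^4+(a^{2-n}|m(0)|)^2$, which implies the desired bound only once $N$ is already known to be small.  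The asserted inequality $N^4\lesssim a^{4-n}\dot m(0)+(a^{2-n}|m(0)|)^4$ is therefore not justified by the argument you sketch; the missing ingredient is exactly the divergence rewriting above, which removes the need for any bootstrap.
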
 \begin{proof} Examining the 
expansions of $U$ and $u_s U$ in spherical harmonics, and using 
equation~(\ref{expansion}), we find that \begin{eqnarray*} 
m(s)-m(0)&=&{-2\over \omega_{n-1}(n-2)}\lim_{\rho\to\infty} \int_{S_\rho} 
u_s {\partial u_s\over\partial r}\,d\mu\\ &=& {-2\over 
\omega_{n-1}(n-2)}\lim_{\rho\to\infty} \int_{S_\rho} u_s \nabla_s u_s\cdot 
\nu_s\,d\mu_s\\ &=& {-2\over \omega_{n-1}(n-2)} \int_{M}( |\nabla_s 
u_s|_s^2 + u_s \Delta_s u_s) \,d\mu_s\\ &=& {-2\over \omega_{n-1}(n-2)} 
\int_{M}( |\nabla_s u_s|_s^2 + {n-2\over 4(n-1)} R_s u_s^2) \,d\mu_s.\\ 
\end{eqnarray*} 
Noting that $u_0=1$ and $g_0$ is scalar-flat, we compute 
\begin{eqnarray*} \dot{m}(0)&=& {-1\over 2\omega_{n-1}(n-1)} \int_{M} 
\dot{R}_0 \,d\mu_0\\ &=& {1\over 2\omega_{n-1}(n-1)} \int_{M} \langle 
\dot{g}_0, \ric_0\rangle_0 \,d\mu_0\\ &=& {1\over 2\omega_{n-1}(n-1)} 
\int_{M} \varphi|\ric_0|_0^2 \,d\mu_0\\ &\geq& {1\over 2\omega_{n-1}(n-1)} 
\int_{B_{3a}-B_{a/2}} |\ric_0|_0^2 \,d\mu_0,\\ \end{eqnarray*} where the 
other terms in $\dot{R}_0$ are divergences. (Differentiation under the 
integral can be justified as in \cite[Section 3]{schyau}.)  In particular, 
we see that $\dot{m}(0)\geq0$.
Analyzing the 
formula for $\ric(U^{4\over n-2}\delta_{ij})$, we find that 
$$\int_{B_{3a}-B_{a/2}} |\nabla U|^4\,d\mu\leq C \dot{m}(0),$$ for some 
constant $C$ depending only on $n$.  Since $U$ is harmonic, it follows 
that $$\sup_{a<|x|<2a}\left|U(x)-\ave_{a<|y|<2a} U(y)\right|\leq 
C\left(a^{4-n}\int_{B_{3a}-B_{a/2}} |\nabla U|^4\,d\mu\right)^{1\over 
4},$$ 
for some $C$ depending only on $n$.  Using the maximum principle and the 
fact that the spherical average of $U$ on $S_r$ is 
$1+{m(0)\over2}r^{2-n}$, we find that 
\begin{eqnarray*} 
\sup_{|x|>a}|U(x)-1|&\leq& \sup_{a<|x|<2a}|U(x)-1|\\ &\leq& 
C(a^{4-n}\dot{m}(0))^{1\over4}+\left|\ave_{a<|x|<2a}U(x)-1\right| \\ &=& 
C(a^{4-n}\dot{m}(0))^{1\over4}+ {1\over|B_{2a}-B_{a}|} 
\left|\int_{a}^{2a}\omega_{n-1}r^{n-1} \left({m(0)\over 2}r^{2-n}\right) 
\,dr\right|\\ &\leq& 
C\left[(a^{4-n}\dot{m}(0))^{1\over4}+a^{2-n}|m(0)|\right], \end{eqnarray*} 
where we may have had to enlarge the constant at the last step. 
\end{proof}

\begin{defin}
Given $n$, let $\mathcal{A}$ be the set of all scalar-flat, complete asymptotically flat manifolds $(M,g)$, such that in one of the ends, $g_{ij}(x)=U(x)^{4\over n-2}\delta_{ij}$ in  $\rr^n\smallsetminus B_{1}$, where $U$ is a positive harmonic function on $\rr^n\smallsetminus B_{1}$ with $\lim_{x\to\infty} U(x)=1$, and $|m(g)|\leq 1$.
\end{defin}

\begin{lem}\label{biglemma}
There exist positive constants $a$, $s_0$, and $C_0$, depending only on $n$, such that for all $(M,g)\in\mathcal{A}$ and $|s|<s_0$, the conformal factor $u_s$ described above exists, and $|\ddot{m}(s)|<C_0$.
\end{lem}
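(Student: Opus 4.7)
The plan is to combine a priori geometric estimates on the end of $(M,g)$ with an implicit function theorem argument for the conformal Laplace equation, and then differentiate the resulting mass-integral formula twice in $s$. The first task is to rule out the possibility that $\ric(g)$ is uncontrolled in the annulus $\Omega_a:=\{a/3\le|x|\le 4a\}$ where the perturbation lives. The only a priori constraints on the conformal factor $U$ are $U>0$, $U$ harmonic on $\rr^n\setminus B_1$, $U(x)\to 1$ at infinity, and $|m(g)|\le 1$. Positivity implies $(U-1)_-\le 1$ pointwise on $S_1$, while the spherical average of $U-1$ on $S_1$ equals $m(g)/2$ (from the spherical-harmonic expansion and~(\ref{expansion})); together these give the uniform bound $\int_{S_1}|U-1|\,d\sigma\le C_n$. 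The Poisson integral formula for the exterior of $B_1$ then yields $|U(x)-1|\le C_n|x|^{2-n}$ for $|x|\ge 2$, and interior derivative estimates for harmonic functions upgrade this to $|\nabla^k(U-1)(x)|\le C_{n,k}|x|^{2-n-k}$. Once $a$ is chosen large in terms of $n$, this gives $U\ge\tfrac12$ and $\|\ric(g)\|_{C^k(\Omega_a)}=O(a^{-n-k})$, uniformly over $\mathcal{A}$.

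With this uniform smallness of the perturbation in hand, the existence of $u_s$ follows from the implicit function theorem applied to the conformal Laplace equation. Writing $u_s=1+v_s$, the equation $L_s u_s=0$ becomes $\Delta_s v_s = \tfrac{n-2}{4(n-1)}R_s(1+v_s)$, and because $g_0$ is scalar-flat the right-hand side is compactly supported in $\Omega_a$ with $\|R_s\|_{C^k}=O(|s|\,a^{-n})$. The linearization at $s=0$ is $\Delta_0$, which by the standard weighted-space theory for the Laplacian on asymptotically flat manifolds is an isomorphism $W^{2,p}_{-\tau}\to L^p_{-\tau-2}$ for $0<\tau<n-2$; moreover, because the sources of interest are supported in $\Omega_a$ where $g$ is uniformly close to Euclidean by the preceding paragraph, the operator norm of $\Delta_0^{-1}$ on such sources admits a bound depending only on $n$ and $a$. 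The implicit function theorem then produces $v_s\in W^{2,p}_{-\tau}$ for $|s|<s_0(n)$, small in this norm.

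Differentiating $L_s u_s=0$ once and twice in $s$ gives the formulas $\dot u_s = -L_s^{-1}(\dot L_s u_s)$ and $\ddot u_s = -L_s^{-1}(\ddot L_s u_s+2\dot L_s\dot u_s)$. The $s$-derivatives of the coefficients of $L_s$ depend on $\dot g_s=\varphi\ric(g)$, $\ddot g_s=0$, and their spatial derivatives, all supported in $\Omega_a$ with uniform $C^k$ bounds from the first step, so $\dot u_s$ and $\ddot u_s$ are uniformly controlled in $W^{2,p}_{-\tau}$. Finally, I twice-differentiate the identity
\[
m(s)-m(0) \;=\; -\frac{2}{\omega_{n-1}(n-2)}\int_M\Bigl(|\nabla_s u_s|_s^2 + \tfrac{n-2}{4(n-1)}R_s u_s^2\Bigr)\,d\mu_s
\]
to obtain $\ddot m(s)$ as an integral whose integrand is supported in $\Omega_a$ (or contains one of the compactly supported factors $R_s$, $\dot R_s$, $\ddot R_s$) and consists of products of $g_s$, $u_s$, $R_s$ and their $s$- and spatial derivatives; each such factor has been bounded uniformly in $\mathcal{A}$, giving $|\ddot m(s)|<C_0$ with $C_0$ depending only on $n$.

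The main obstacle is the uniform geometric control in the first step: once one rules out pathological behavior of $U$ on the annulus using positivity together with the mass bound (via the $L^1$ bound on $U-1$ over $S_1$ and the exterior Poisson kernel), the remaining steps — the implicit function theorem and the twice-differentiated integral — are essentially weighted-Sobolev bookkeeping.
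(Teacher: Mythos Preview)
Your overall architecture matches the paper's, and your first step (uniform control of $U$ on $|x|\ge 2$ via positivity, the $L^1$ bound on $S_1$, and the exterior Poisson kernel) is a perfectly good substitute for the paper's Harnack-plus-maximum-principle argument. The genuine gap is in the second step. You assert that ``because the sources of interest are supported in $\Omega_a$ where $g$ is uniformly close to Euclidean, the operator norm of $\Delta_0^{-1}$ on such sources admits a bound depending only on $n$ and $a$.'' This is exactly the nontrivial point, and the reason you give does not establish it. The inverse $\Delta_g^{-1}$ is a \emph{global} operator on $(M,g)$; even if the source $f$ is supported where you have uniform control, the solution $v=\Delta_g^{-1}f$ lives on all of $M$, including the compact core and any other ends, where you have no uniform information whatsoever. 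Standard weighted-space theory gives you an isomorphism for each fixed $(M,g)\in\mathcal{A}$, but the norm of the inverse a priori depends on the geometry of the whole manifold, and $\mathcal{A}$ places no constraint on that interior geometry.

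The paper confronts this directly: it singles out the subspace $\mathcal{B}$ of functions that are $\Delta_g$-harmonic away from $|x|>a/3$ (which contains all the $v_s$, $\dot v_s$, $\ddot v_s$ you need) and proves a uniform estimate $|v|_{2+\alpha,\sigma,a/4}\le C_3|\Delta_g v|_{\alpha,\sigma-2,a/4}$ on $\mathcal{B}$ by a barrier argument. The barrier is built by gluing an explicit subsolution $f_\infty(x)=-|x|^\sigma/U(x)$ on the end to a $\Delta_g$-harmonic function $f_0$ on the interior with prescribed boundary value on $|x|=a/5$; the maximum principle then gives the bound without ever needing quantitative control of $g$ on the compact part. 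This is the idea your sketch is missing. A smaller issue: your claim that the integrand of $\ddot m(s)$ is supported in $\Omega_a$ is not correct as stated, since terms like $|\nabla_0\dot v_s|_0^2$ are not compactly supported; one must either integrate by parts (using that $\dot v_s$ is $\Delta_0$-harmonic outside $\Omega_a$) or, as the paper does, read off $\ddot m(s)$ from the flux integral at infinity and bound it by the weighted norm of $\ddot v_s$.
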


Before proving this lemma, let us see how the two lemmas together prove 
our Main Theorem in the scalar-flat case.  Once a value of $a$ is fixed, 
the first lemma (together with the Positive Mass Theorem) reduces the 
problem to showing that for any $\gamma>0$, 
there exists $\delta>0$ such that $m(0)<\delta$ implies that 
$\dot{m}(0)<\gamma$.   Let $C_0$ and $s_0$ be the constants described in the 
previous lemma, and without loss of generality, assume that $\gamma<C_0 
s_0$, so that $|-\gamma/C_0|<s_0$.  Let $m(0)<\delta ={\gamma^2\over 
2C_0}$.  Assuming that the Non-strict Positive Mass Theorem holds on $M$, 
we have $m(-\gamma/ C_0)\geq0$.  Therefore

\begin{eqnarray*}
{\gamma^2\over 2C_0}-0&>&m(0)-m(-\gamma/C_0)\\
&=&\int_{-\gamma/C_0}^0 \dot{m}(s)\,ds\\
&\geq&\int_{-\gamma/C_0}^0 (\dot{m}(0)+C_0 s)\,ds\\
&=&\dot{m}(0){\gamma\over C_0}-{\gamma^2\over2C_0}.
\end{eqnarray*}
Thus $\dot{m}(0)<\gamma$.  The argument given above is the only place in this paper where we invoke the assumption that the Non-strict Positive Mass Theorem holds on $M$.  Specifically, it is not assumed in the statements of any of the lemmas.

The following lemma is the fundamental reason why Lemma \ref{biglemma} is true. 
\begin{lem}\label{uniform}
There exist constants $C_1$ and $C(k)$, depending only on $n$, such that for all $(M,g)\in\mathcal{A}$, $|x|\geq 2$, 
$${1\over C_1} < U(x) < C_1$$
$$|U(x)-1|\leq C(0)|x|^{2-n}$$
$$|\nabla^k U(x)|\leq C(k)|x|^{2-n-k}$$
for each positive integer $k$.
\end{lem}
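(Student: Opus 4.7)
The plan is to prove the three bounds in succession, relying only on the fact that $U$ is a positive harmonic function on $\mathbb{R}^n\setminus B_1$ tending to $1$ at infinity with $|m(g)|\leq 1$.

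First, for the uniform $L^\infty$ bound $1/C_1 < U < C_1$ on $\{|x|\geq 2\}$, I would combine the spherical-average formula with Harnack's inequality. Because the only radial harmonic function on $\mathbb{R}^n\setminus\{0\}$ tending to $1$ at infinity is $1+c|x|^{2-n}$, the spherical average of $U$ on $S_\rho$ equals $1 + \tfrac{m}{2}\rho^{2-n}$, which lies in $[3/4,\,5/4]$ whenever $\rho\geq 2$, $|m|\leq 1$, and $n\geq 3$. Applying Harnack's inequality on the annulus $\rho/2<|x|<2\rho$---whose Harnack constant is independent of $\rho$ by scaling---controls both $\sup_{S_\rho}U$ and $\inf_{S_\rho}U$ in terms of the average, yielding the desired two-sided bound with a constant depending only on $n$.

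Second, for the decay $|U-1|\leq C(0)|x|^{2-n}$, I would apply the Kelvin transform. Let $W = U-1$; then $W$ is harmonic on $\{|x|>1\}$, bounded by the first step, and tends to $0$ at infinity. The transform $\tilde W(y) = |y|^{2-n}W(y/|y|^2)$ is harmonic on $B_1\setminus\{0\}$, and since $W\to 0$ at infinity, $\tilde W(y)=o(|y|^{2-n})$ as $y\to 0$. The removable singularity theorem for harmonic functions then extends $\tilde W$ harmonically across the origin. Bounding $\tilde W$ on the sphere $|y|=1/2$ using the first step and applying the maximum principle on $B_{1/2}$ gives a uniform bound $|\tilde W|\leq C$ there, which translates back via the transform to $|W(x)|\leq C|x|^{2-n}$ for $|x|\geq 2$. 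For the derivative bounds, I would then apply the standard interior estimate for harmonic functions to $W$ on the ball $B(x, |x|/4)\subset\{|y|>3|x|/4\}$, which is contained in $\{|y|>1\}$ whenever $|x|\geq 2$. Using the sup bound $|W|\leq C|x|^{2-n}$ from the previous step, the interior estimate yields $|\nabla^k U(x)| = |\nabla^k W(x)|\leq C(k)|x|^{-k}\cdot|x|^{2-n} = C(k)|x|^{2-n-k}$.

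The essential step is the first one: pointwise uniform bounds on $U$ cannot come directly from the constraint $|m(g)|\leq 1$, which only controls one mode of $U$; they must be extracted from the spherical-average information combined with positivity, which is precisely the setting in which Harnack's inequality applies. Once the two-sided bound is in hand, the decay rate and all derivative bounds are standard consequences of harmonic-function theory.
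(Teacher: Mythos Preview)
Your proof is correct and follows essentially the same structure as the paper's: Harnack combined with the spherical-average formula $\ave_{S_r}U = 1+\tfrac{m}{2}r^{2-n}$ for the two-sided bound, then a decay estimate for $U-1$, then standard interior gradient estimates.

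The only difference is in the second step. The paper obtains $|U-1|\leq C|x|^{2-n}$ more directly: once $U$ is uniformly bounded on $|x|\geq 3/2$ and $\lim_{x\to\infty}U=1$, one simply compares $U-1$ with the explicit harmonic barriers $\pm C|x|^{2-n}$ on $\{|x|\geq 3/2\}$ via the maximum principle, choosing $C$ large enough to dominate on the inner sphere. Your Kelvin-transform argument (reducing to a removable singularity on the punctured ball) is a perfectly valid alternative and yields the same bound; it just invokes slightly more machinery where a one-line barrier comparison suffices.
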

\begin{proof}
By the Harnack inequality, we know that there is a constant $C$ such that, for all $|x|\geq 3/2$,
$${1\over C}\ave_{S_{|x|}}U < U(x) < C\ave_{S_{|x|}} U.$$
Since the average value of $U$ on $S_r$ is $1+{m(g)\over 2}r^{2-n}$, and $|m(g)|\leq 1$, we conclude that
$${1\over 2C} <  {1\over C}\left(1-{1\over 2}|x|^{2-n}\right)< U(x) < 
C\left(1+{1\over 2}|x|^{2-n}\right)<{3C\over2}$$
for all $|x|>3/2$.  This establishes the first inequality.  Since we also know that \hbox{$\lim_{x\to\infty}U(x)=1$}, it follows from the maximum principle that for some new constant $C$ and all $|x|\geq 3/2$,
$$1 - C |x|^{2-n}<U(x)< 1+C|x|^{2-n}.$$
Since $U$ is harmonic, the gradient bounds follow routinely.
\end{proof}

The previous lemma gives us uniform estimates on all $(M,g)\in\mathcal{A}$ and all of their derivatives for $|x|>2$.  It is clear that there exist constants $s_0$ and $C_0$ as in the statement of the lemma, \emph{if} we allow them to depend on $g$.  Therefore uniform estimates on $g$ should give us the result we want.  The only issue here is that we do not have uniform estimates on $(M,g)\in\mathcal{A}$ away from the region $|x|>2$, so we must argue that the behavior of $g$ there does not cause a problem.  But this is reasonable to expect since the mass is computed at infinity and $\varphi$ is supported in the region $|x|>a/3$.

Consider $(M,g)\in\mathcal{A}$.  For any nonnegative integer $k$ and $0<\alpha<1$, let $C^{k,\alpha}_\sigma(M)$ be the weighted Schauder space\footnote{See \cite{locmco} for background on weighted \emph{Sobolev} spaces.  While the corresponding theory for weighted \emph{Schauder} spaces (also known as H\"{o}lder spaces) is well-understood, the literature is somewhat sparse.  See \cite[Section 6.1]{marshall}.} of $C^{k,\alpha}$ functions on $M$ whose $m$-th derivatives are $O(|x|^{\sigma-m})$ for all $m\leq k$, and whose ``$k+\alpha$ H\"{o}lder norms are $O(|x|^{\sigma-k-\alpha})$.''  There are many equivalent ways to define a weighted Schauder norm on the space $C^{k,\alpha}_\sigma(M)$.  We choose one with the property that for a function $v$ supported in $|x|>1$, the norm is equal to the usual one for Euclidean space.  In other words, for $v$ supported in $|x|>1$,
\begin{equation}\label{norm}
|v|_{C^{k,\alpha}_\sigma(M)}=\sup \left||x|^{-\sigma}v\right|+\sup \left||x|^{1-\sigma}\nabla
v\right|+\cdots+\sup\left||x|^{k-\sigma}\nabla^k v\right|+\left[|x|^{k+\alpha-\sigma}\nabla^k
v\right]_{\alpha},
\end{equation}
where $\nabla$ is the flat connection on $\rr^n$.  The basic point is that we are choosing a weighted Schauder norm that is independent of $(M,g)\in\mathcal{A}$ in the region $|x|>1$, and it does not matter what it is elsewhere. 
\begin{defin}
We adopt the shorthand notation notation that for $\rho>1$, $|v|_{k+\alpha,\sigma,\rho}$ is the (Euclidean) $C^{k,\alpha}_\sigma(\rr^n\smallsetminus B_{\rho})$ norm of $v$.  In other words, it is the norm described in equation~(\ref{norm}), restricted to $\rr^n\smallsetminus B_{\rho}$.
\end{defin}

For the rest of this paper, we fix a specific value of $\alpha\in(0,1)$ and a specific value of $\sigma\in(2-n,0)$.

Let $L_s$ be the conformal Laplacian for the metric $g_s$, that is,
$$L_s u=\Delta_s u -{4(n-1)\over n-2}R_s u.$$
Recall that we are looking for $u_s$ such that $\lim_{x\to\infty}u_s(x)=1$ and $L_s u_s=0$.  Equivalently, we seek $v_s=u_s-1\in C^{2,\alpha}_\sigma(M)$ such that $L_s v_s ={4(n-1)\over n-2}R_s$.  
It is well-known that for  $\sigma\in(2-n,0)$,
$$\Delta_g:C^{2,\alpha}_\sigma(M)\too C^{0,\alpha}_{\sigma-2}(M)$$ 
is an isomorphism.  Then since
$$L_s:C^{2,\alpha}_\sigma(M)\too C^{0,\alpha}_{\sigma-2}(M)$$
is a deformation of $\Delta_g$, it is natural to use an Inverse Function Theorem argument to prove that $L_s$ is also an isomorphism for small $s$.  However, since we do not have global uniform control over $g$, we have to modify this basic argument.

\begin{defin}
Let $\mathcal{B}$ be the space of all functions in $C^{2,\alpha}_\sigma(M)$ that are $\Delta_g$-harmonic on $M$ away from the region $\rr^n\smallsetminus B_{a/3}(0).$ 
\end{defin}

\begin{lem}
For a large enough value of $a>10$, there exists $C_3$ depending only on $\sigma$, $\alpha$, and $n$, such that for all $(M,g)\in\mathcal{A}$ and all $v\in\mathcal{B}$,
$$|v|_{2+\alpha,\sigma,a/4}< C_3 |\Delta_g v|_{\alpha,\sigma-2,a/4}.$$
\end{lem}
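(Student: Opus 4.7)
The plan is to prove this estimate by contradiction combined with a blow-up compactness argument. Assume no such $C_3$ exists even for arbitrarily large $a$; then one can extract sequences $(M_k,g_k)\in\mathcal{A}$ and $v_k\in\mathcal{B}$ satisfying $|v_k|_{2+\alpha,\sigma,a/4}=1$ while $|\Delta_{g_k}v_k|_{\alpha,\sigma-2,a/4}\to 0$. By Lemma~\ref{uniform}, the conformal factors $U_k$ and metrics $g_k=U_k^{4/(n-2)}\delta_{ij}$ enjoy uniform $C^{m,\alpha}_{loc}$ bounds on $\{|x|>2\}$; after passing to a subsequence, $g_k\to g_\infty=U_\infty^{4/(n-2)}\delta_{ij}$ in $C^\infty_{loc}$ there, and a weighted Arzel\`a--Ascoli argument gives $v_k\to v_\infty$ in $C^{2,\alpha'}_{\sigma'}(\{|x|>a/4\})$ for any $\alpha'<\alpha$ and $\sigma'\in(\sigma,0)$.

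The decisive step that improves this compactness is a maximum-principle bound on the interior. Since $v_k\in\mathcal{B}$ is $\Delta_{g_k}$-harmonic on the bounded region $\Omega_k:=M_k\setminus\{|x|>a/3\}$ whose only boundary is the coordinate sphere $\{|x|=a/3\}$,
\[
\sup_{\Omega_k}|v_k|\;\leq\;\sup_{|x|=a/3}|v_k|\;\leq\;(a/3)^{\sigma}.
\]
Hence $v_k$ is uniformly bounded on the annulus $\{2<|x|<a/3\}$, where $g_k$ is already uniformly controlled. Standard interior elliptic estimates then promote the convergence to $C^{2,\alpha}_{loc}(\{|x|>2\})$, so that $v_\infty$ is $\Delta_{g_\infty}$-harmonic on $\{|x|>2\}$ and bounded there by $(a/3)^{\sigma}$.

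The heart of the argument is to conclude that $v_\infty\equiv 0$. Using the scalar-flatness of $g_\infty$, the conformal-Laplacian identity $\Delta_{g_\infty}v_\infty=U_\infty^{-(n+2)/(n-2)}\Delta(U_\infty v_\infty)$ says that $w:=U_\infty v_\infty$ is \emph{Euclidean}-harmonic on $\{|x|>2\}$ with $w\to 0$ at infinity, hence admits an expansion $w=\sum_\ell B_\ell r^{-\ell-n+2}Y_\ell(\theta)$. Green's-identity flux conditions inherited from the global harmonicity of each $v_k$ on $\Omega_k$ (paired against Euclidean multipole test functions $r^\ell Y_\ell$ carried along the limit) force every $B_\ell=0$, so $v_\infty\equiv 0$. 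Once this is established, standard weighted Schauder estimates on $\{|x|>a/4\}$---using that the perturbation $\Delta_{g_k}-\Delta$ has operator norm $O(a^{2-n})$ by Lemma~\ref{uniform}---yield
\[
|v_k|_{2+\alpha,\sigma,a/4}\;\leq\;C\bigl(|\Delta_{g_k}v_k|_{\alpha,\sigma-2,a/4}+|v_k|_{C^0(\{a/4<|x|<R\})}\bigr)
\]
for any fixed $R>a/4$, and both right-hand terms tend to zero, contradicting the normalization $|v_k|_{2+\alpha,\sigma,a/4}=1$.

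The main obstacle is the vanishing $v_\infty\equiv 0$. A harmonic function on an exterior domain that decays at infinity need not vanish (consider $|x|^{2-n}$), so one cannot rely solely on the decay and harmonicity of $v_\infty$; instead one must exploit the global harmonicity of each $v_k$ on the entire interior $\Omega_k$ to generate enough integral-identity constraints to annihilate all multipole coefficients of $w$. Constructing the appropriate test functions---adapted to the (possibly varying) interior geometry of the $M_k$---is the most delicate point of the argument.
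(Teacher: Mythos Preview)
Your approach via contradiction and compactness is genuinely different from the paper's, which gives a direct maximum-principle proof with an explicit barrier.  The paper sets $f_\infty(x)=-|x|^\sigma/U(x)$ on $|x|\ge 2$ and checks that
\[
\Delta_g f_\infty(x)=-\sigma(n-2+\sigma)\,|x|^{\sigma-2}\,U(x)^{-(n+2)/(n-2)}\;\ge\;\tfrac{1}{C_2}|x|^{\sigma-2},
\]
using the uniform bounds on $U$ from Lemma~\ref{uniform}.  It then glues $f_\infty$ to a $\Delta_g$-harmonic function $f_0$ on the interior (matching boundary values so that the maximum of the two is continuous and globally $\Delta_g$-subharmonic) and applies the maximum principle to $v/|\Delta_g v|_{0,\sigma-2,a/5}\pm C_2 f$ on all of $M$.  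This yields $|v|_{0,\sigma,a/5}\le C_1C_2|\Delta_g v|_{0,\sigma-2,a/5}$ directly, and the weighted Schauder estimate finishes the job.  No compactness, no limits, no analysis of the interior geometry beyond the existence of $f_0$.

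Your proposal has a real gap at the step $v_\infty\equiv 0$.  The ``flux conditions'' you invoke---pairing against $r^\ell Y_\ell$ and passing to the limit---do not give what you need: the interiors $\Omega_k$ carry no uniform geometry, so there is no way to construct test functions adapted to them that survive the limit, and the single identity $\int_{S_r}\partial_{\nu_{g_k}}v_k\,d\mu_{g_k}=0$ (valid only when $M_k$ has one end) does not translate into $\int_{S_r}\partial_r(U_\infty v_\infty)\,d\mu=0$, let alone kill higher multipoles.  You essentially admit this is unresolved.  In fact the vanishing \emph{can} be obtained, but by the maximum principle rather than flux: the interior harmonicity gives $r\mapsto\sup_{S_r}|v_k|$ nondecreasing on $(2,a/3)$, while exterior harmonicity of the limit gives it nonincreasing on $(a/4,\infty)$; on the overlap it is constant, so $|v_\infty|$ attains its supremum at an interior point and the strong maximum principle forces $v_\infty\equiv 0$.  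Note, however, that once you have $v_\infty=0$ you still need to upgrade $C^0_{loc}$ convergence to smallness of $|v_k|_{0,\sigma,a/4}$, and that step requires exactly a barrier of the type $|x|^\sigma$ on the exterior---so your route ultimately passes through the paper's key ingredient anyway, only after a longer detour.
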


\begin{proof}
Specfically, we choose $a$ large enough so that $(a/10)^{\sigma}< {1\over C_1^2}$, where $C_1$ is the constant from Lemma \ref{uniform}. We define a function $f_\infty$ on $|x|\geq 2$ by the formula 
$$f_\infty(x)= -{|x|^{\sigma}\over U(x)}.$$
Meanwhile, we define $f_0$ in the complement of $|x|>a/5$ to be the unique $\Delta_g$-harmonic function with boundary values $-{1\over C_1}2^{\sigma}$ at $|x|=a/5$, and zero at the infinities of the other ends (if there are any).  Finally, we define a function $f$ on all of $M$ by setting 
$$f(x)=
\left\{\begin{array}{ll}
f_\infty(x) & \text{ for }|x|\geq a/5\\
\max(f_0(x),f_\infty(x))& \text{ for }2\leq x\leq a/5\\
f_0(x) & \text{ elsewhere.}
\end{array}\right.$$
Observe that at $|x|=2$, by Lemma \ref{uniform}, 
$$f_\infty(x)\leq -{2^\sigma\over C_1}\leq f_0(x),$$
by the maximum principle.
On the other hand, at $|x|= a/5$, 
$$f_\infty(x)\geq -{C_1 (a/5)^\sigma} \geq -{1\over C_1}2^{\sigma}=f_0(x),$$
by our assumption on $a$.  In other words, we see that $f$ is a continuous function on $M$, and that the locus where $f_0(x)=f_\infty(x)$ is confined  to the region $2<x<a/5$.

Now it is a straightforward computation to see that for $|x|>2$,
we have 
$$\Delta_g f_\infty(x) = -\sigma(n-2+\sigma)|x|^{\sigma-2}  U(x)^{-{n+2\over n-2}}.$$  Since $\sigma\in(2-n,0)$ and we have the uniform upper bound $U(x)<C_1$, we see that for all $|x|>2$,
$$\Delta_g f_\infty(x)> {1\over C_2}|x|^{\sigma-2},$$
for some constant $C_2$.  We now see that $f$ is the maximum of two $\Delta_g$-subharmonic functions, and hence $f$ is globally $\Delta_g$-subharmonic.  Directly from the definitions, we know that for all $v\in\mathcal{B}$, $|x|>1$,
$$\left|{\Delta_g v(x)\over |\Delta_g v|_{0,\sigma-2, a/5}}\right|\leq |x|^{\sigma-2}.$$
Therefore, for $|x|>a/5$,
$$\Delta_g\left( { v\over |\Delta_g v|_{0,\sigma-2, a/5}}+C_2 f\right)>0.$$
Also, away from $|x|>a/3$, we know that $\Delta_g v(x)=0$ and that $f$ is $\Delta_g$-subharmonic.  Consequently, for any $v\in\mathcal{B}$,  ${ v\over |\Delta_g v|_{0,\sigma-2, a/5}}+C_2 f$ is a globally $\Delta_g$-subharmonic function on~$M$ that approaches zero at the infinities of each end.  By the maximum principle, we see that for all $x\in M$,
$${v(x)\over |\Delta_g v|_{0,\sigma-2, a/5}}+C_2 f(x)\leq 0.$$
Specifically, for $|x|>a/5$,
$${v(x)\over |\Delta_g v|_{0,\sigma-2, a/5}}\leq C_2{|x|^\sigma\over U(x)}.$$
Since we also have the uniform lower bound $U(x)>{1\over C_1}$ (Lemma \ref{uniform}), we obtain for $|x|>a/5$,
$$|x|^{-\sigma}v(x) \leq C_1 C_2 |\Delta_g v|_{0,\sigma-2, a/5}.$$
A similar argument applied to the $\Delta_g$-superharmonic function ${v(x)\over |\Delta_g v|_{0,\sigma-2, a/5}}-C_2 f(x)$ then gives us the following important estimate.
\begin{equation}\label{injective}
|v|_{0,\sigma,a/5}  \leq C_1 C_2|\Delta_g v|_{0,\sigma-2, a/5}.
\end{equation}

The standard weighted elliptic estimate tells us that for any $v\in C^{2,\alpha}_\sigma(M)$,
$$|v|_{2+\alpha,\sigma,a/4}<C\left(|\Delta_g v|_{\alpha,\sigma-2,a/5}+|v|_{0,\sigma,a/5}\right),$$
where $C$ is independent of $(M,g)\in\mathcal{A}$ because of our uniform bounds from Lemma \ref{uniform}.  Combining this with inequality (\ref{injective}),  we see that for $v\in\mathcal{B}$,
$$|v|_{2+\alpha,\sigma,a/4}< C_3 |\Delta_g v|_{\alpha,\sigma-2,a/4}$$
for some new constant $C_3$.
\end{proof}

\begin{proof}[Proof of Lemma \ref{biglemma}]

Again, using our uniform control over $(M,g)\in\mathcal{A}$ in the region $|x|>2$ (Lemma \ref{uniform}), it is clear that we can choose $s_0>0$ such that for all $|s|<s_0$ and $v\in\mathcal{B}$, 
$$|(L_s-\Delta_g)v|_{\alpha,\sigma-2,a/4}\leq {1\over 2C_3}|v|_{2+\alpha,\sigma,a/4},$$
where $C_3$ is the constant from the previous lemma.  Combining this estimate with the previous lemma, we find that for all $|s|<s_0$ and all $v\in\mathcal{B}$,
\begin{equation}\label{estimate}
|v|_{2+\alpha,\sigma,a/4}< 2C_3 |L_s v|_{\alpha,\sigma-2,a/4}.
\end{equation}
In particular, we observe that the restriction of $L_s$ to $\mathcal{B}$ is injective for $|s|<s_0$.  Since $L_s$ is equal to $\Delta_g$ away from $|x|>a/3$, it follows that kernel of $L_s$ in $C^{2,\alpha}_\sigma(M)$ is contained in $\mathcal{B}$.  Consequently, the operator $L_s:C^{2,\alpha}_\sigma(M)\too C^{0,\alpha}_{\sigma-2}(M)$ is injective.  Since it is also Fredholm of index zero, we see that $L_s$ is an isomorphism.  This establishes that the desired $v_s=u_s-1$ exists for all $|s|<s_0$.

Next, we need to show that there exists a $C_0$ such that $|\ddot{m}(s)|<C_0$.  By the Inverse Function Theorem, we know that $v_s=u_s-1$ depends smoothly on $s$.  Going back to the equation
$$m(s)-m(0)={-2\over \omega_{n-1}(n-2)}\lim_{\rho\to\infty} \int_{S_\rho} u_s {\partial u_s\over\partial r}\,d\mu,$$
we see that 
$$\ddot{m}(s)={-2\over \omega_{n-1}(n-2)}\lim_{\rho\to\infty} \int_{S_\rho} {\partial \ddot{v}_s\over\partial r}\,d\mu.$$
Therefore, all we need is a uniform bound on $|\ddot{v}_s|_{2+\alpha,\sigma,a/4}$.  But this follows easily from a bootstrapping argument from the bound (\ref{estimate}).  Specifically, we find that the bound on $|\ddot{v}_s|_{2+\alpha,\sigma,a/4}$ depends on the constant $C_3$, $|R_s |_{\alpha,\sigma-2,a/4}$, $|\dot{R}_s |_{\alpha,\sigma-2,a/4}$, $|\ddot{R}_s |_{\alpha,\sigma-2,a/4}$, and the norms of $L_s$, $\dot{L}_s$, and $\ddot{L}_s$ as maps from $C^{2,\alpha}_\sigma(\rr^n\smallsetminus B_{a/4})$ to $C^{0,\alpha}_{\sigma-2}(\rr^n\smallsetminus B_{a/4})$, all of which are controlled.
\end{proof}

We have now completed the proof for the scalar-flat case.  Now let us 
consider the general case of our Main Theorem, which considers metrics of 
nonnegative scalar curvature. 

\begin{lem}
Let $(M, g)$ be a complete asymptotically flat manifold of 
nonnegative
scalar curvature.  Let $\rr^n\smallsetminus B_1$ be a coordinate chart
for one of the ends of $M$, and suppose that
$$g_{ij}(x)=U(x)^{4\over n-2}\delta_{ij}$$  
in  $\rr^n\smallsetminus B_1$, where
$U$ is a positive harmonic function on $\rr^n\smallsetminus B_{1}$ with $\lim_{x\to\infty} U(x)=1$.

Then there exists a scalar-flat asymptotically flat metric $\tilde{g}$ on $M$ and a positive function $v$ on $M$, such that $v$ approaches $1$ at the infinities of all the ends and $g=v^{4\over n-2}\tilde{g}$.  Setting $\tilde{U}=U/v$ and choosing a constant $a>1$,
$$0\leq\sup_{|x|>a} (U(x)-\tilde{U}(x)) < C (m(g)-m(\tilde{g})),$$
for some constant $C$ depending only on $a$ and $n$.
\end{lem}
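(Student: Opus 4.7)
The plan is to construct $\tilde{g}$ as a scalar-flat conformal deformation $\tilde{g} = \phi^{4/(n-2)} g$ with $\phi > 0$ tending to $1$ at every infinity. Nonnegativity of $R_g$ will then force $\phi \leq 1$ (equivalently $v := \phi^{-1} \geq 1$), so that in the distinguished end $\tilde{U} := U/v$ satisfies $0 \leq U - \tilde{U}$, and because $\tilde{U}$ inherits Euclidean harmonicity from scalar-flatness of $\tilde{g}$, the difference $U - \tilde{U}$ is a non-negative Euclidean-harmonic function on the end whose supremum is controlled by its spherical average via the Harnack inequality. That average in turn equals $\tfrac{m(g)-m(\tilde{g})}{2}\,r^{2-n}$ by~(\ref{expansion}).

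For the construction, I would look for $\phi = 1 + w$ with $w \in C^{2,\alpha}_\sigma(M)$, $\sigma \in (2-n, 0)$ chosen small enough to guarantee asymptotic flatness of $\tilde{g}$ at every end, solving
$$\Big(\Delta_g - \tfrac{n-2}{4(n-1)}R_g\Big) w = \tfrac{n-2}{4(n-1)} R_g,$$
with right-hand side in $C^{0,\alpha}_{\sigma-2}(M)$ by the decay hypotheses on $g$. The operator on the left is Fredholm of index zero on these weighted Schauder spaces, and because its zeroth-order coefficient $-\tfrac{n-2}{4(n-1)}R_g$ is $\leq 0$, the strong maximum principle forces its kernel to be trivial (any kernel element decays to zero at each infinity and so cannot attain a nonzero interior extremum). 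Hence the operator is an isomorphism and $w$ exists. Positivity of $\phi$ is preserved by a continuity argument replacing $R_g$ by $tR_g$ for $t \in [0,1]$: $\phi_0 \equiv 1 > 0$, $\phi_t$ depends continuously on $t$, and a first interior zero $\phi_{t^*}(x_0) = 0$ would be an interior minimum of the nonnegative function $\phi_{t^*}$, which the strong minimum principle would then force to be $\phi_{t^*} \equiv 0$, contradicting $\phi_{t^*} \to 1$.

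Given $\phi$, the equation $\Delta_g \phi = \tfrac{n-2}{4(n-1)} R_g \phi \geq 0$ makes $\phi$ $\Delta_g$-subharmonic, so the maximum principle together with $\phi \to 1$ at each infinity yields $\phi \leq 1$ throughout $M$, i.e., $v \geq 1$. In the distinguished end, $\tilde{g}_{ij} = (U/v)^{4/(n-2)}\delta_{ij} = \tilde{U}^{4/(n-2)} \delta_{ij}$; scalar-flatness plus conformal flatness force $\tilde{U}$ to be Euclidean-harmonic on $\rr^n \setminus B_1$, and $\tilde{U} \to 1$. Therefore $w := U - \tilde{U}$ is nonnegative, Euclidean-harmonic on $\rr^n \setminus B_1$, and vanishes at infinity. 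Its spherical-harmonic expansion consistent with this decay has $\ell = 0$ radial coefficient $\tfrac{m(g) - m(\tilde{g})}{2}$ by~(\ref{expansion}) applied to $U$ and $\tilde{U}$, while the $\ell \geq 1$ modes integrate to zero on spheres, giving $\ave_{S_r} w = \tfrac{m(g) - m(\tilde{g})}{2}\, r^{2-n}$. For $r \geq a > 1$, the annulus $\{r/2 < |x| < 2r\}$ sits in $\rr^n \setminus B_1$, so the scale-invariant Harnack inequality for the nonnegative harmonic function $w$ gives $\sup_{|x|=r} w \leq C_n \ave_{S_r} w$; since $r^{2-n} \leq a^{2-n}$ for $r \geq a$, taking the supremum over $r \in [a,\infty)$ produces
$$\sup_{|x|>a}(U - \tilde{U}) \leq \tfrac{C_n}{2}\, a^{2-n}\big(m(g) - m(\tilde{g})\big),$$
which is the claimed bound. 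The degenerate case in which $w$ vanishes somewhere in the end forces $w \equiv 0$ by the strong maximum principle, whence $m(g) = m(\tilde{g})$ and both sides are zero.

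The main obstacle is the construction of $\phi$: both the Fredholm-isomorphism property of the conformal Laplacian on weighted Schauder spaces and the continuity argument guaranteeing global positivity of $\phi$ require careful invocation of the maximum principle together with decay at infinity. Once $\phi$ (and hence $\tilde{U}$) is in hand, the harmonic-comparison estimate via spherical averages and Harnack is essentially automatic, since $U$ and $\tilde{U}$ share exactly the same angular-mode subtraction structure and only the $\ell = 0$ mode survives in the average.
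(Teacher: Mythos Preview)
Your argument is correct and matches the paper's: construct the scalar-flat conformal metric, use $R_g\ge 0$ and the maximum principle to obtain $v\ge 1$, observe that $U-\tilde U$ is a nonnegative Euclidean-harmonic function on the end with spherical average $\tfrac12\bigl(m(g)-m(\tilde g)\bigr)r^{2-n}$, and bound the supremum by that average. The paper executes the last step with the Poisson kernel on $S_{(a+1)/2}$ instead of Harnack; your Harnack variant is equally valid, but note that for $1<a\le 2$ the annulus $\{r/2<|x|<2r\}$ need not lie in $\rr^n\smallsetminus B_1$, so either let the Harnack constant depend on $a$ (which the lemma permits) or first reduce to the single sphere $|x|=a$ via the maximum principle.
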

This lemma, combined with an application of the scalar-flat case of the Main Theorem to $(\tilde{g},\tilde{U})$, clearly proves the general case of the Main Theorem.

\begin{proof} First, the existence of $\tilde{g}$ and $v$ comes 
immediately by solving the conformal Laplace's equation.  In particular, 
we see that $v$ is a $\tilde{g}$-superharmonic function.  Setting 
$\tilde{U}=U/v$ as above, we see that both $U$ and $\tilde{U}$ are 
Euclidean harmonic functions.  Since $v$ is $\tilde{g}$-superharmonic 
and 
approaches $1$ at the infinities of the ends, the maximum principle tells 
us that that $v(x)-1$ is a nonnegative function. Consequently, 
$U-\tilde{U}=\tilde{U}(v-1)$ is a nonnegative Euclidean harmonic 
function on 
$|x|>1$.

Let $K(x,y)$ be the Poisson kernel for the Euclidean Laplacian on $\rr^n\smallsetminus B_{(a+1)/2}$ (assuming zero boundary condition at infinity).  Then we have, for any $|x|>a$,
$$U(x)-\tilde{U}(x)=\int_{S_{(a+1)/2}}K(x,y)(U(y)-\tilde{U}(y))\,d\mu_y.$$
Since $U-\tilde{U}$ is nonnegative, we find that
\begin{eqnarray*}
\sup_{|x|>a} (U(x)-\tilde{U}(x)) &\leq& \int_{S_{(a+1)/2}} \left(\sup_{|x|>a,|\xi|=(a+1)/2} K(x,\xi)\right) (U(y)-\tilde{U}(y))\,d\mu_y\\
&=& C(m(g)-m(\tilde{g})),
\end{eqnarray*}
for some $C$ depending only on $a$ and $n$.  In the final step, we used 
the fact that the average values
of $U$ and $\tilde{U}$ on $S_{(a+1)/2}$ are $1+{m(g)\over 
2}\left({a+1\over 2}\right)^{2-n}$ and  $1+{m(\tilde{g})\over 
2}\left({a+1\over 2}\right)^{2-n}$, respectively.

\end{proof}

\appendix
\section{The spin case}

If we assume that our manifold is spin, then there is an easier way to 
prove our main theorem.  The statement also becomes slightly stronger.  
This argument appears in \cite[Section 12]{bray}.  For background on 
Witten's spinor argument, see \cite{bartnik}. 
\begin{thm}\label{spin} Let 
$(M^n, g)$ be a complete asymptotically flat spin manifold of nonnegative 
scalar curvature. Let $\rr^n\smallsetminus B_{R}(0)$ be a coordinate chart 
for one of the ends of $M$, and suppose that
$$g_{ij}(x)=U(x)^{4\over n-2}\delta_{ij}$$
in  $\rr^n\smallsetminus B_{R}(0)$, where
$U$ is a positive (Euclidean) harmonic function on $\rr^n\smallsetminus B_{R}(0)$ with $\lim_{x\to\infty} U(x)=1$.  

Then for any $a>1$, $\epsilon>0$, there exists $\delta>0$ such that if the mass of $(M,g)$ in this end is less then $\delta R^{n-2}$, then 
$$\sup_{|x|>aR}|U(x)-1|<\epsilon.$$ 
The constant $\delta$ depends only on $\epsilon$, $a$, and $n$.  In particular, it does not depend on the topology of $M$.
\end{thm}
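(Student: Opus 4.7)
The plan is to quantify Witten's harmonic-spinor proof of the Positive Mass Theorem, following the sketch in \cite[Section 12]{bray}. By scale invariance we may assume $R=1$ throughout. For each unit constant spinor $\psi_0$ at infinity of the chosen end, the spin hypothesis together with the standard weighted-Sobolev theory for the Dirac operator provides a $g$-harmonic spinor $\psi$ on $M$ asymptotic to $\psi_0$ at infinity of the chosen end (and decaying to zero at any other infinity). Witten's integration-by-parts identity then reads
$$m(g)=\frac{4}{(n-1)\omega_{n-1}}\int_M\left(|\nabla^g\psi|_g^2+\tfrac{R_g}{4}|\psi|_g^2\right)d\mu_g,$$
so $R_g\ge 0$ yields the $L^2$ bound $\int_M|\nabla^g\psi|_g^2\,d\mu_g\le C\,m(g)$. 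Moreover the Lichnerowicz identity $\Delta_g|\psi|_g^2=2|\nabla^g\psi|^2+\tfrac{R_g}{2}|\psi|^2\ge 0$ makes $|\psi|_g^2$ globally $g$-subharmonic, and the maximum principle (combined with the prescribed asymptotics at the various infinities) forces $|\psi|_g\le 1$ on $M$.

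Next, I would pass to the Euclidean gauge in the end via the Bourguignon--Gauduchon identification $\Phi$ of spinor bundles together with the conformal covariance of the Dirac operator: the spinor $\hat\psi:=U^{(n-1)/(n-2)}\Phi^{-1}(\psi)$ is $\delta$-harmonic on $\mathbb{R}^n\setminus B_1$ (each of its components is a flat harmonic function) with $\hat\psi\to\psi_0$ at infinity, and pointwise norms satisfy $|\psi|_g=U^{-(n-1)/(n-2)}|\hat\psi|_\delta$. The bound $|\psi|_g\le 1$ therefore reads $|\hat\psi(x)|_\delta\le U(x)^{(n-1)/(n-2)}$ on $\mathbb{R}^n\setminus B_1$. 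Expanding the Witten energy $\int|\nabla^g\psi|^2\,d\mu_g$ in the Euclidean gauge through the Bourguignon--Gauduchon formula for the spin connection produces terms proportional to $|\nabla^\delta\hat\psi|^2$ and to $|\nabla U|^2|\hat\psi|^2$, plus cross terms; running the construction for a full orthonormal basis $\{\eta_j\}_{j=1}^N$ of constant spinors and summing, the Clifford-algebra antisymmetries kill the unwanted cross terms, yielding the two separate Euclidean $L^2$ estimates
$$\sum_{j=1}^N\int_{|x|\ge 1}|\nabla^\delta(\hat\psi^{(j)}-\eta_j)|_\delta^2\,d\mu_\delta\le C\,m(g),\qquad\int_{|x|\ge 1}|\nabla^\delta U|^2\,d\mu_\delta\le C\,m(g),$$
where uniform Harnack bounds on $U$ are used to absorb lower-order terms.

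Since $\hat\psi^{(j)}-\eta_j$ and $U-1$ are Euclidean-harmonic on the exterior domain and vanish at infinity, their spherical-harmonic expansions on $|x|\ge 1$ have coefficients controlled by these $L^2$ norms; standard weighted Schauder (mean-value) estimates on exterior Euclidean domains then yield
$$\sup_{|x|\ge a}|U(x)-1|\le C(a,n)\sqrt{m(g)},$$
and choosing $\delta>0$ small enough that $C(a,n)\sqrt{\delta}<\epsilon$ completes the proof. The hardest part is the conformal-gauge bookkeeping needed to split Witten's single $L^2$ bound into the two separate Euclidean bounds above: the Bourguignon--Gauduchon expansion of $\nabla^g\psi$ contains terms coupling $\nabla^\delta\hat\psi$ to $\nabla U$, and isolating them cleanly requires careful use of the Clifford-algebra identities together with the basis-summation trick.
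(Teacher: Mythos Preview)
Your route is genuinely different from the paper's. Both begin with Witten's inequality $m(g)\ge c(n)\int_M|\nabla^g\psi|_g^2\,d\mu_g$ for each harmonic spinor asymptotic to a constant, but from there the arguments diverge. The paper packages the bound as follows: restrict to the region $|x|>2a$, where the metric depends only on $U$, and set $\mathcal{F}_i(U)=\inf\int_{|x|>2a}|\nabla\psi|^2$ over spinors asymptotic to $\psi_i$; then $m(g)\ge c(n)\mathcal{F}_i(U)$, each $\mathcal{F}_i$ is continuous on the space $H$ of candidate harmonic functions with the sup metric, the image of $\mathcal{A}^{\spin}$ in $H$ is relatively compact (by Harnack, as in Lemma~\ref{uniform}), and $\mathcal{F}_i(U)=0$ for all $i$ forces $U\equiv 1$. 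A sequential-compactness contradiction then finishes, with no quantitative rate. Your approach instead aims for an explicit estimate $\sup|U-1|\le C(a,n)\sqrt{m}$ via direct conformal-gauge computation, which would be strictly stronger if it goes through.

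The concern is your key step, where you assert that summing over an orthonormal basis and invoking Clifford antisymmetries ``kills the unwanted cross terms'' to produce the separate bounds $\sum_j\int|\nabla^\delta(\hat\psi^{(j)}-\eta_j)|^2\le Cm$ and $\int|\nabla^\delta U|^2\le Cm$. The Clifford trace identities such as $\sum_j\langle e_k\cdot\eta_j,\eta_j\rangle=0$ apply to the \emph{constant} spinors $\eta_j$, but your cross terms involve the actual Euclidean-harmonic spinors $\hat\psi^{(j)}$, which differ from the $\eta_j$ by corrections of size $O(|x|^{2-n})$ that are \emph{not} a priori controlled by $m$: higher multipoles of $U$ could in principle be large even when the monopole $m$ is small---this is exactly what the theorem is meant to rule out. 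So the cross terms $\sum_j\int\langle\nabla^\delta\hat\psi^{(j)},(\text{Clifford})\,\nabla U\cdot\hat\psi^{(j)}\rangle$ are not obviously small, and a naive Cauchy--Schwarz together with the uniform Harnack bounds only yields $\int|\nabla U|^2\le C$, not $\le Cm$. If you can genuinely close this step your argument gives a sharper conclusion; as written it is a real gap, and the paper's soft compactness argument sidesteps the issue entirely by never needing to separate the two contributions.
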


\begin{proof}[Sketch of proof]
Again, without loss of generality, we assume $R=1$.  Given $n$, define 
$\mathcal{A}^{\spin}$ to be the set of all complete 
asymptotically flat spin manifolds $(M,g)$ of nonnegative scalar curvature, such that in one of the ends, $g_{ij}(x)=U(x)^{4\over n-2}\delta_{ij}$ in  $\rr^n\smallsetminus B_{1}$, where $U$ is a positive harmonic function on $\rr^n\smallsetminus B_{1}$ with $\lim_{x\to\infty} U(x)=1$, and $|m(g)|\leq 1$.

Define $H$ to be the set of all positive harmonic functions $U$ on $\rr^n\smallsetminus B_{a}$ such that $\lim_{x\to\infty}U(x)=1$, where the topology on $H$ is given by the $\sup$ metric.

Let $S$ be a Dirac spinor bundle over $M$.  (In brief: Since $M$ is spin, we can lift the orthonormal frame bundle to a principal $\spin(n)$-bundle over $M$.  Choose an irreducible representation of the Clifford algebra, $\Cl(n)$.  This representation restricts to a representation of $\spin(n)$, and we define $S$ to be the bundle associated to it.)  Fix a basis of constant spinors, $\psi_i$, of norm $1$, and define functionals $\mathcal{F}_i:H\too \rr$ by the formula
$$\mathcal{F}_i(U)=\inf \left\{\left. \int_{\rr^n\smallsetminus B_{2a}}|\nabla \psi|^2\,dV\right| \psi\in\Gamma(\rr^n\smallsetminus B_{2a},S)\text{ such that }\lim_{x\to\infty}\psi(x)=\psi_i\right\},$$
where $|\nabla\psi|^2$ is computed using the spin connection and bundle metric induced by the metric $g_{ij}(x)=U(x)^{4\over n-2}\delta_{ij}$.

We now make the following observations:
\begin{itemize}
\item For each $i$, the functional $\mathcal{F}_i$ is continuous on 
$H$.  
(This is 
because the functional is always minimized and the unique minimizer 
depends continuously on $U$.)
\item For all $U\in H$, if $\mathcal{F}_i(U)=0$ for each $i$, then $U$ is the constant function $1$.  (This follows from the fact that a basis of parallel spinors implies a flat metric.)
\item The restriction map from $\mathcal{A}^{\spin}$ to $H$ has relatively compact image in $H$.  (This follows from the same reasoning as in Lemma \ref{uniform}).
\item For any $(M,g)\in\mathcal{A}^{\spin}$ with corresponding 
harmonic function $U$, we know that for each $i$,  $m(g)\geq 
c(n)\mathcal{F}_i(U)$ 
for some constant $c(n)$.  (This follows directly from Witten's spinor argument \cite{witten}.)
\end{itemize}
It is an elementary exercise to combine the four bullet points above to prove the theorem.
\end{proof}

\bibliographystyle{alpha}
\bibliography{research07}

\end{document}